\tikzset{node distance=2cm, auto}
\newtheorem{theorem}{Theorem}
\newtheorem{proposition}[theorem]{Proposition}
\theoremstyle{definition}
\newtheorem{definition}[theorem]{Definition}
\newtheorem{example}[theorem]{Example}
\theoremstyle{definition}
\newtheorem{visual}[theorem]{Visualization}
\theoremstyle{remark}
\newtheorem{remark}[theorem]{Remark}
\numberwithin{theorem}{section}
\newcommand{\X}{\textbf{{X}}}
\def\({\left(}
\def\){\right)}
\def\[{\left[}
\def\]{\right]}
\def\<{\left<}
\def\>{\right>}
\def\st#1#2{\substack{#1\\#2}}
\begin{document}

\title{Coefficients for Higher Order Hochschild Cohomology}

\author{Bruce R. Corrigan-Salter}

\email{ brcs@wayne.edu}

\address{Department of Mathematics, Wayne State University, 656 W. Kirby, Detroit, MI 48202, USA}

\subjclass[2010]{Primary: 13D03; Secondary: 18G30, 55U10, 13D10, 16S80}

\keywords{Hochschild, cohomology, higher order, simplicial, deformation, multi-module, coefficient}

\begin{abstract}
When studying deformations of an $A$-module $M$, Laudal and Yau showed that one can consider $1$-cocycles in the Hochschild cohomology of $A$ with coefficients in the bi-module $End_k(M).$  With this in mind, the use of higher order Hochschild (co)homology, presented by Pirashvili and Anderson, to study deformations seems only natural though the current definition allows only symmetric bi-module coefficients.  In this paper we present an extended definition for higher order Hochschild cohomology which allows multi-module coefficients (when the simplicial sets $\X_{\bullet}$ are accommodating) which agrees with the current definition.  Furthermore we determine the types of modules that can be used as coefficients for the Hochschild cochain complexes based on the simplicial sets they are associated to.

\end{abstract}

\maketitle

\section{Introduction}
\label{INTRO}

In \cite{Pirashvili} Pirashvili makes explicit a definition of higher order Hochschild homology of a $k$-algebra $A$ with coefficients in an $A$-module $M$, implicitly defined in \cite{Anderson} by Anderson.  This is done by considering the composition of functors:
$$\mathcal{L}(A,M)\colon \Gamma \rightarrow Vect$$
$$\{0,1,\cdots,n\}\rightarrow M\otimes A^{\otimes n}$$ and
$$Y\colon \Delta^{op}\rightarrow \Gamma$$
where we let $\Gamma$ denote the category of finite sets and where $Y$ is a pointed simplicial set.  This composition yields a simplicial vector space, where the homology of the associated chain complex serves as the definition of higher order Hochschild homology.  

One can naturally extend the definition given in \cite{Pirashvili} to also define a notion of higher order Hochschild cohomology (see \cite{Ginot} for a precise definition).  A reason for doing so would be to consider deformation theory in this new setting.  This connection between traditional Hochschild cohomology and deformation theory has been studied for some time.  In \cite{Gerstenhaber}, Gerstenhaber illustrates the connection between deformations of an associative $k$-algebra $A$ and $2$-cocycles in the Hochschild cohomology of $A$ with trivial coefficients by showing that $HH^2(A,A)$ is the group of isomorphism classes of square-zero deformations of $A$, and in \cite{Yau} and \cite{Laudal}, Yau and Laudal, respectively, show the connection between deformations of an $A$-module $M$ with $1$-cocycles of the Hochschild cochain complex of $A$ with coefficients in $End_k(M).$  More precisely, first order deformations of an $A$-module $M$ are in bijection with $HH^1(A,End_k(M)).$

Upon examination, it can be seen that the current definition for higher order Hochschild cohomology allows only symmetric bi-module coefficients, however to study deformations of a module we need to consider the nonsymmetric bi-modules $End_k(M).$

One knows that classical Hochschild cohomology takes coefficients in a bi-modules, so we ask the following question.  What coefficients can higher order Hochschild cohomology take?  The goal of this paper thus becomes two fold.  We aim to extend the definition of higher order Hochschild cohomology to include multi-modules (not necessarily symmetric) as coefficients and in particular to determine what type of coefficient modules can be used when choosing a simplicial set $\X_{\bullet}$ to construct a cosimplicial $k$-vector space over.
With this in mind, we aim to show the following Theorem.

\begin{theorem}
\label{MAINTHM}
Let $A$ be a commutative $k$-algebra.  Given a pointed simplicial set $\X_{\bullet}$, there exists a cosimplicial $k$-vector space $(M,X)^{\bullet}$ associated to an $A$-module $M$ given by 

$$(M,X)^n= \hom_k (k \otimes_k 
\bigotimes\limits_{\st{\sigma \in \X_n}{\sigma \neq \ast}} A,M)$$ 

with coface and codegeneracy maps given by

$$d_n^i f(1\otimes_k \bigotimes\limits_{\st{\sigma \in \X_{n+1}}{\sigma \neq \ast}}a_{\sigma})=\prod\limits_{\st{\sigma \in \X_{n+1}}{d_i (\sigma)=\ast}} (\Lambda_{(i,n)}^{\sigma}(a_{\sigma}))\cdot f(1\otimes_k \bigotimes\limits_{\st{\Omega\in\X_n}{\Omega \neq \ast}} \prod\limits_{\st{\sigma \in \X_{n+1}}{d_i (\sigma)=\Omega}}a_{\sigma})$$

and

$$s_n^i f(1\otimes_k \bigotimes\limits_{\st{\sigma \in \X_{n+1}}{\sigma\neq\ast}}a_{\sigma})=f(1\otimes_k\bigotimes\limits_{\st{\Omega \in \X_{n+1}}{\Omega\neq\ast}} 1\cdot \prod\limits_{\st{\sigma \in \X_n}{s_i(\sigma)=\Omega}}a_{\sigma})$$

if the actions $\Lambda_{(-,-)}^-$ on $M$ satisfy the following for simplices $\sigma, \Omega$ and $\mu$:

\begin{itemize}
\item[i)]  $\Lambda_{(j,n+1)}^{\sigma} = \Lambda_{(i,n+1)}^{\sigma}$ if $\sigma \neq \ast , d_i (\sigma)=d_j(\sigma)=\ast$ and the dimension of $\sigma$ is at least 2 and $i<j.$  We call this a \textit{sweep around}.
\item[ii)]  $\Lambda_{(j,n+1)}^{\sigma}=\Lambda_{(j-1),n}^{\Omega}$ if $d_i(\sigma)=\Omega , d_j(\sigma)=\ast , d_{j-1}(\Omega)=\ast$ and the dimension of $\sigma$ is at least 2.  We call this a \textit{sweep out 1}.
\item[iii)]  $\Lambda_{(i,n)}^{\Omega}=\Lambda_{(j-1,n)}^{\mu}$ if $d_i(\Omega)=\ast , d_{j-1}(\mu)=\ast$ and there exists a $\sigma$ of dimension at least 2 where $d_j(\sigma)=\Omega$, $d_i(\sigma)=\mu$ and $i<j.$  We call this a \textit{sweep across} .
\item[iv)]  $\Lambda_{(i,n)}^{\Omega}=\Lambda_{(i,n+1)}^{\sigma}$ if $d_i(\sigma)=\ast , d_i(\Omega)=\ast , d_j(\sigma)=\Omega$ and the dimension of $\sigma$ is at least 2.  We call this a \textit{sweep out 2}.
\end{itemize}
where $\Lambda_{(i,n)}^\sigma (a))$ represents the $(\Lambda_{(i,n)}^\sigma$ action of $a$ whenever $0\leq i\leq 1$ and $\sigma \in X_{n+1}$ and $d_i(\sigma)=\ast$.  We take a product of such actions to represent the composition of the actions, which we assume to be commutative (i.e. if $M$ has two actions, we actually assume that $M$ is a bimodule).

\end{theorem}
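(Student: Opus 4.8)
The plan is to verify directly that the displayed operators make $(M,X)^{\bullet}$ into a cosimplicial $k$-vector space. Writing $d^{i}\colon (M,X)^{n}\to (M,X)^{n+1}$ for the cofaces and $s^{i}\colon (M,X)^{n+1}\to (M,X)^{n}$ for the codegeneracies (suppressing the dimension), this means checking that each $d^{i}$, $s^{i}$ is a well defined $k$-linear map and that the cosimplicial identities hold: $d^{j}d^{i}=d^{i}d^{j-1}$ for $i<j$; $s^{j}d^{i}=d^{i}s^{j-1}$ for $i<j$; $s^{j}d^{i}=d^{i-1}s^{j}$ for $i>j+1$; $s^{j}d^{j}=\mathrm{id}=s^{j}d^{j+1}$; and $s^{j}s^{i}=s^{i}s^{j+1}$ for $i\le j$. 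By $k$-linearity it is enough to evaluate each side on a generator $1\otimes_{k}\bigotimes_{\sigma\neq\ast}a_{\sigma}$. Well-definedness is routine: every formula is $k$-linear in each tensor slot; the inner products $\prod_{d_{i}(\sigma)=\Omega}a_{\sigma}$ live in the commutative ring $A$, so are unambiguous (empty product $=1$); the outer products $\prod_{d_{i}(\sigma)=\ast}\Lambda^{\sigma}_{(i,n)}(a_{\sigma})$ of operators on $M$ are unambiguous by the hypothesis that the $\Lambda$-actions commute (empty product $=\mathrm{id}_{M}$); and since each $\Lambda^{\sigma}_{(i,n)}(-)$ is a module action one has $\Lambda^{\sigma}_{(i,n)}(ab)=\Lambda^{\sigma}_{(i,n)}(a)\Lambda^{\sigma}_{(i,n)}(b)$, which allows the nested products to be rearranged. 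I would also note that $d^{i}$ is assembled from the face map $d_{i}\colon\X_{n+1}\to\X_{n}$ and $s^{i}$ from the degeneracy $s_{i}\colon\X_{n}\to\X_{n+1}$ of $\X_{\bullet}$, so that deleting all $\Lambda$-operators leaves, as the ``combinatorial shadow'' of each cosimplicial identity, exactly the corresponding simplicial identity in $\X_{\bullet}$; the content of the theorem is that (i)--(iv) are exactly what makes the $\Lambda$-bookkeeping close up as well.

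The heart of the argument is the coface--coface identity $d^{j}d^{i}=d^{i}d^{j-1}$, $i<j$. Fixing $f\in(M,X)^{n}$ and a generator indexed by $\X_{n+2}$, expand both composites. Using the module-action identity, the operator prefixed to $f(-)$ in $d^{j}d^{i}f$ is the product over $\sigma\in\X_{n+2}$ of $\Lambda^{\sigma}_{(j,n+1)}(a_{\sigma})$ when $d_{j}(\sigma)=\ast$ and of $\Lambda^{d_{j}(\sigma)}_{(i,n)}(a_{\sigma})$ when $d_{j}(\sigma)\neq\ast$ but $d_{i}(d_{j}(\sigma))=\ast$; symmetrically for $d^{i}d^{j-1}f$. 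The simplicial identity $d_{i}d_{j}=d_{j-1}d_{i}$ then shows that the argument fed into $f$ agrees on both sides (after rearranging by commutativity of $A$), and that a simplex $\sigma$ contributes an operator on one side precisely when it does on the other, namely when $d_{i}(d_{j}(\sigma))=\ast$. It remains to match these operators simplex by simplex, which I would do by stratifying $\sigma\in\X_{n+2}$ according to whether $d_{i}(\sigma)$ and $d_{j}(\sigma)$ equal the basepoint: (a) $d_{i}(\sigma)=d_{j}(\sigma)=\ast$; (b) $d_{j}(\sigma)=\ast$ but $d_{i}(\sigma)=\Omega\neq\ast$; (c) $d_{i}(\sigma)=\ast$ but $d_{j}(\sigma)=\Omega'\neq\ast$; (d) $d_{i}(\sigma)=\Omega\neq\ast$, $d_{j}(\sigma)=\Omega'\neq\ast$ with $d_{i}(\Omega')=\ast$ (possibly $\Omega\neq\Omega'$). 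In case (a) the two sides contribute $\Lambda^{\sigma}_{(j,n+1)}$ and $\Lambda^{\sigma}_{(i,n+1)}$, equal by \emph{sweep around} (i); in (b), $\Lambda^{\sigma}_{(j,n+1)}$ and $\Lambda^{\Omega}_{(j-1,n)}$, equal by \emph{sweep out 1} (ii); in (c), $\Lambda^{\Omega'}_{(i,n)}$ and $\Lambda^{\sigma}_{(i,n+1)}$, equal by \emph{sweep out 2} (iv); in (d), $\Lambda^{\Omega'}_{(i,n)}$ and $\Lambda^{\Omega}_{(j-1,n)}$, equal by \emph{sweep across} (iii). For $\sigma$ of dimension $\le1$ one must note separately, from the degeneracy identities, that the configurations in (a)--(d) requiring dimension $\ge2$ do not actually arise — such a low-dimensional (necessarily degenerate) simplex has at most one basepoint face — so no further hypothesis enters. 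Reassembling the strata gives $d^{j}d^{i}f=d^{i}d^{j-1}f$.

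The remaining identities are lighter and I would treat them afterwards. For $s^{j}d^{i}$ with $i\neq j,j+1$ the composite pairs one face map with one degeneracy; since a degeneracy never sends a non-basepoint simplex to $\ast$, the $\Lambda$-operators that occur all come from the single face map, and after inserting the simplicial identity relating $s_{j}$ and $d_{i}$ (together with commutativity of $A$, and, at the points where a dimension index on an action shifts, the same hypotheses (i)--(iv) used above) the two sides agree. For $s^{j}d^{j}$ and $s^{j}d^{j+1}$ use $d_{j}s_{j}=\mathrm{id}=d_{j+1}s_{j}$: every simplex the composite would send to the basepoint is fed the unit $1\in A$, on which each $\Lambda$ acts as $\mathrm{id}_{M}$, so the prefixed operator is trivial and what survives is $f$ evaluated on its original argument. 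Finally $s^{j}s^{i}=s^{i}s^{j+1}$ for $i\le j$ is purely combinatorial, following from $s_{i}s_{j}=s_{j+1}s_{i}$ in $\X_{\bullet}$ and commutativity of $A$, and uses no condition on the $\Lambda$'s.

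The step I expect to be the main obstacle is the stratification inside the coface--coface identity. One must enumerate, for every $\sigma\in\X_{n+2}$, each way its faces and iterated faces reach the basepoint under the two face maps involved, keep exact track of which index labels the resulting action and of the shift $j\mapsto j-1$, and then check that the configurations governed by (i)--(iv) are exactly those that occur, with nothing double-counted and nothing left over. The subtlety is concentrated in the simplices of dimension $\ge2$ that collapse to the basepoint in more than one direction — which is why those three hypotheses carry the dimension restriction — and making this case analysis airtight is what the proof really comes down to.
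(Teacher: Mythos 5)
Your proposal is correct and follows essentially the same route as the paper: expand both sides of each cosimplicial identity and read off the forced action identifications, with your four strata (a)--(d) in the coface--coface step corresponding exactly to conditions (i)--(iv) (the paper writes out only the first case explicitly and declares the remaining three ``analogous''). The one quibble is your aside about simplices of dimension $\le 1$ in that step: it is vacuous, since every $\sigma\in\X_{n+2}$ has simplicial degree $n+2\ge 2$, so the dimension hypotheses in (i)--(iv) are automatically satisfied there (and the parenthetical claim that such a simplex has at most one basepoint face is not needed).
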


\begin{remark}
For $$(M,X)^n= \hom_k (k \otimes_k 
\bigotimes\limits_{\st{\sigma \in \X_n}{\sigma \neq \ast}} A,M)$$ in Theorem \ref{MAINTHM} we assume that the trivial $k$ as a tensor factor in the domain represents the base point.
\end{remark}

While the list of axioms in Theorem \ref{MAINTHM} may seem random, we give a intuitive visual description of each axiom in Section \ref{VIS} and in Section \ref{EXAMPLES} we use the visual descriptions to give results for a variety of simplicial sets.
  
We can now define the following. 

\begin{definition}
The cohomology of the cochain complex associated to $(M,X)^{\bullet}$ (by taking alternating sums of coface maps) is the {\em higher order Hochschild cohomology} of $A$ with coefficients in $M,$ which we denote as $HH^{\ast}_{X}(A,M).$
\end{definition}

For the remainder of this paper, we fix the field $k$ and assume $A$ is a commutative algebra over $k.$  We also assume the $A$-module $M$ can have multiple actions as our quest is to determine what actions may be present in the associated higher order Hochschild cochain complex.  

\section{Comparison to Classical Definitions and Applications}

\subsection{Comparison}
We would like to see that the definition given in section \ref{INTRO} agrees with both the classical Hochschild cohomology definition as well as the higher order Hochschild cohomology definition given by Pirashvili.  We provide the connections through the following examples.

\begin{example}
Let $X_{\bullet}=S^1$ be the simplicial set of $S^1$ which contains one 0-simplex and one non-degenerate 1-simplex.  Given an $A$ bi-module $M,$ we see that $HH_{X}^{\ast}(A,M)$ is classical Hochschild cohomology.  The fact that we are able to work over bi-modules, which are not necessarily symmetric is illustrated by Example \ref{classical}.
\end{example}
  
\begin{example}
For a simplicial set $X_{\bullet}$ and symmetric $A$ bi-module $M$ , we see that $HH^{\ast}_{X}(A,M)$ agrees with the definition of higher order Hochschild cohomology given implicitly by Pirashvili in \cite{Pirashvili}.
\end{example}

\subsection{Applications}
As stated in section \ref{INTRO}, one hope of defining higher order Hochschild cohomology with multi-module coefficients is to discover additional connections to deformation theory.  To begin, consider the following definition.

\begin{definition}
\label{alg}
Given a $k$-algebra $A$ and $A$-module $I,$ we define a $\emph{first order deformation}$ of $A$ by $I$ to be a short exact sequence $$0\rightarrow I \rightarrow A' \xrightarrow{\psi} A \rightarrow 0$$ which splits as $k$-vector spaces and with the properties that $\psi$ is a ring map and $I^2=0$ as an ideal of $A'.$
\end{definition}

We see that defining a deformation of $A$ by $I$ amounts to defining a multiplication for $A\oplus I,$ given by $(a_0,i_0)(a_1,i_1)=(a_0a_1,a_oi_1+i_0a_1+f(a_0\otimes a_1)),$ but such a multiplication gives a map $f\colon A \otimes A \rightarrow I$ so that $f \in HH^2(A,I).$
Now, if we consider the trivial first order deformation of $A$ by $A$ i.e. $f=0\in HH^2(A,A),$ we get a short exact sequence $$0\rightarrow A \rightarrow A[x]/x^2 \xrightarrow{\psi} A \rightarrow 0,$$ which induces a map of modules $$mod(A[x]/x^2) \rightarrow mod(A)$$ which takes an $A[x]/x^2$ module $M$ to $M/xM.$  This brings us to the following definition.

\begin{definition}
Given an $A$-module $M,$ a first order deformation of $M$ is an $A[x]/x^2$-module $M'$ so that $M'/xM' \cong M.$
\end{definition}
As stated in section \ref{INTRO}, we get the following.

\begin{proposition}\cite[3.1]{Yau}
Isomorphism classes of first order deformations of an $A$-module $M$ are in bijection with $HH^1(A, End_k(M)).$
\end{proposition}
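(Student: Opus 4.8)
The plan is to construct the bijection by hand: from a first order deformation $M'$ I will extract a ``linearized'' action, show it is precisely a Hochschild $1$-cocycle of $A$ with values in the bimodule $End_k(M)$, and then check that changing the deformation within its isomorphism class — or changing the auxiliary choices made along the way — alters that cocycle exactly by a coboundary.

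First I would pin down the $k$-module underlying a deformation. Reading the ``splits as $k$-vector spaces'' condition of Definition~\ref{alg} into the module setting, a first order deformation $M'$ of $M$ is $M\otimes_k k[x]/x^2$ as a $k[x]/x^2$-module, so that $M'=M\oplus xM$ with $xM\cong M$ and $x$ acting in the evident way. Since $A[x]/x^2$ is commutative, the $A$-action on $M'$ commutes with multiplication by $x$, hence is determined by its restriction to $M\oplus 0$, which must have the form $a\cdot(m,0)=(am,\psi(a)(m))$ for a unique $k$-linear map $\psi\colon A\to End_k(M)$ (while $a\cdot(0,m)=(0,am)$). Recalling that $End_k(M)$ carries the generally non-symmetric $A$-bimodule structure $(a\cdot f)(m)=af(m)$, $(f\cdot a)(m)=f(am)$ — exactly the kind of coefficient module the extended framework of this paper is built to handle — a direct computation shows that the $A$-module axioms for $M'$ are equivalent to $\psi(ab)=a\cdot\psi(b)+\psi(a)\cdot b$ for all $a,b\in A$ (unitality forces $\psi(1)=0$, which in any case follows from this identity). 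Thus a deformation together with its chosen $k$-splitting is the same datum as a Hochschild $1$-cocycle of $A$ with coefficients in $End_k(M)$.

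Next I would compare the two notions of equivalence. Call two deformations isomorphic if there is an $A[x]/x^2$-module isomorphism $\Phi\colon M'\to M''$ reducing to the identity on $M$ modulo $x$; such a $\Phi$ is automatically of the form $\Phi(m,0)=(m,\lambda(m))$, $\Phi(0,m)=(0,m)$ for some $\lambda\in End_k(M)$, and unravelling the $A$-linearity of $\Phi$ shows that $\psi''$ differs from $\psi'$ by the Hochschild coboundary of the $0$-cochain $\lambda$. Conversely, every $\lambda\in End_k(M)$ produces such a $\Phi$ between the deformations attached to $\psi'$ and to $\psi'-\delta\lambda$, and replacing the chosen $k$-splitting of $M'$ by another one changes $\psi$ by a coboundary in the same way. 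Hence $[M']\mapsto[\psi]$ is a well-defined bijection from isomorphism classes of first order deformations of $M$ onto $HH^1(A,End_k(M))$, the trivial deformation $M\otimes_k k[x]/x^2$ corresponding to $0$.

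The step I expect to need the most care is the very first one: making precise the $k$-linear structure of a deformation — i.e.\ the implicit flatness/splitting hypothesis parallel to the algebra case in Definition~\ref{alg} — and fixing exactly what is meant by an isomorphism of deformations; were arbitrary $A[x]/x^2$-module isomorphisms allowed, one would have to further quotient $HH^1(A,End_k(M))$ by the natural action of $\mathrm{Aut}_A(M)$. Once these points are settled, everything else is the routine cocycle bookkeeping sketched above.
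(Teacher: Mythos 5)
Your proposal is correct, and it is the standard argument: the paper itself supplies no proof of this proposition, simply citing \cite{Yau}, and the proof given there is exactly the linearization-plus-coboundary bookkeeping you carry out (it is also the module-level analogue of the computation the paper sketches just above for algebra deformations and $HH^2(A,I)$). The one point you rightly flag is genuinely needed: the paper's definition of a first order deformation of a module requires only $M'/xM'\cong M$, which by itself does not force $M'\cong M\oplus xM$ with $xM\cong M$ (e.g.\ $M'=M$ with $x$ acting by zero satisfies it); one must import the flatness/$k$-splitting hypothesis, parallel to the splitting condition in Definition~\ref{alg}, as Yau's own definition does, and likewise restrict to isomorphisms reducing to the identity modulo $x$. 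With those conventions fixed, your cocycle and coboundary computations are exactly right.
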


Seeing the connections between classical Hochschild cohomology and deformation theory, one can then ask the question. Are there similar connections to higher order Hochschild cohomology when considering additional deformations?  Certainly we could consider modifying Definition \ref{alg} to let $A' \cong A\oplus I$ where $I^3=0$ instead of $I^2=0$ (cubed zero deformations) or let $A=A[x,y]/x^2,y^2$ (or any Artinian algebra).  We could even consider deformations using Steenrod relations.  This would, in principle give a means of computing all isomorphism classes of finitely generated modules over an Artinian algebra.  Current work of the author and Salch seek to answer such questions and discover what structures exist on $HH_{X}^{\ast}(A,M).$
\section{Proof of Theorem \ref{MAINTHM}}
\label{PROOF}
In order for the cosimplicial structure to exist, we simply need the cosimplicial identities to be satisfied, precisely:

\begin{itemize}
\item[a)] $d^jd^i=d^id^{j-1}$ for $i<j$
\item[b)] $s^js^i=s^{i-1}s^j$ for $i\geq j$
\item[c)] $s^jd^i=\left\{\begin{matrix}d^is^{j-1}  & $for$ & i<j & & \\
$id$ & $for$ & i=j & $or$ & i=j+1 \\ d^{i-1}s^j & $for$ & i>j+1 & & \end{matrix}\right.$
\end{itemize}

With this in mind, we prove Theorem \ref{MAINTHM}:

\begin{proof}[Proof of Theorem \ref{MAINTHM}]

When composing the coface and codegeneracy maps from Section \ref{INTRO} it becomes clear what action identifications must be made in order for the cosimplicial structure to exist.

For b) notice when $i\geq j$

$$s_{n-1}^js_n^i f(1\otimes_k \bigotimes\limits_{\st{\sigma\in \X_n}{\sigma\neq\ast}} a_{\sigma})=f(1\otimes_k \bigotimes\limits_{\st{\mu \in \X_{n+1}}{\mu \neq \ast}}1\cdot \prod\limits_{\st{\Omega \in \X_n}{s_i(\Omega)=\mu}}\prod\limits_{\st{\sigma \in \X_{n-1}}{s_j(\sigma)=\Omega}}a_{\sigma})$$

but this is precisely 

$$f(1\otimes_k \bigotimes_{\st{\mu \in \X_{n+1}}{\mu\neq\ast}}1\cdot \prod\limits_{\st{\sigma \in \X_{n-1}} {s_is_j(\sigma)=\mu}}a_{\sigma})$$
similarly we get 
$$s_{n-1}^{i-1}s_n^j f(1\otimes_k \bigotimes\limits_{\st{\sigma\in \X_n}{\sigma\neq\ast}} a_{\sigma})=f(1\otimes_k \bigotimes_{\st{\mu \in \X_{n+1}}{\mu\neq\ast}}1\cdot \prod\limits_{\st{\sigma \in \X_{n-1}}{s_js_{i-1}(\sigma)=\mu}}a_{\sigma}).$$
Since $s_is_j(\sigma)=s_js_{i-1}(\sigma)$ for all $\sigma \in \X_{n-1}$ we get $s^js^i=s^{i-1}s^j.$

Now for part a) notice when $i<j$

$$d_{n+1}^jd_n^if(1\otimes_k \bigotimes\limits_{\st{\sigma \in \X_{n+2}}{\sigma\neq\ast}}a_{\sigma})$$ $$=\prod\limits_{\st{\sigma \in \X_{n+2}}{d_j(\sigma)=\ast}}(\Lambda_{(j,n+1)}^{\sigma}(a_{\sigma}))\prod\limits_{\st{\Omega \in \X_{n+1}} {d_i(\Omega)=\ast}} (\Lambda_{(i,n)}^{\Omega}(\prod\limits_{\st{\sigma \in \X_{n+2}}{d_j(\sigma)=\Omega}}a_{\sigma}))\cdot f(1\otimes_k \bigotimes\limits_{\st{\mu \in \X_n}{\mu\neq\ast}}\prod\limits_{\st{\sigma \in \X_{n+2}} {d_id_j(\sigma)=\mu}}a_{\sigma})$$
and
$$d_{n+1}^id_n^{j-1}f(1\otimes_k \bigotimes\limits_{\st{\sigma \in \X_{n+2}}{\sigma\neq\ast}}a_{\sigma})$$ $$=\prod\limits_{\st{\sigma \in \X_{n+2}}{d_i(\sigma)=\ast}}(\Lambda_{(i,n+1)}^{\sigma}(a_{\sigma}))\prod\limits_{\st{\Omega \in \X_{n+1}}{d_{j-1}(\Omega)=\ast}} (\Lambda_{(j-1,n)}^{\Omega}(\prod\limits_{\st{\sigma \in \X_{n+2}} {d_i(\sigma)=\Omega }}a_{\sigma}))\cdot f(1\otimes_k \bigotimes\limits_{\st{\mu \in \X_n}{\mu\neq\ast}}\prod\limits_{\st{\sigma \in \X_{n+2}}{d_{j-1}d_i(\sigma)=\mu}}a_{\sigma})$$
Now, if there exists a $\sigma \in \X_{n+2}$ so that $d_i(\sigma)=d_j(\sigma)=\ast$ then we must have that 
$$d_{n+1}^jd_n^if(1\otimes_k \otimes a\bigotimes\limits_{\substack{\gamma \in \X_{n+2}\\ \gamma \neq \ast \\ \gamma \neq \sigma}}1)=d_{n+1}^id_n^{j-1}f(1\otimes_k \otimes a\bigotimes\limits_{\substack{\gamma \in \X_{n+2} \\ \gamma \neq \ast \\ \gamma \neq \sigma}}1)$$
(where $a$ is the element for the tensor factor associated to $\sigma$) but this gives us that 
$$(\Lambda_{(j,n+1)}^{\sigma}(a))f(1)=(\Lambda_{(i,n+1)}^{\sigma}(a))f(1)$$
which implies $$\Lambda_{(j,n+1)}^{\sigma}=\Lambda_{(i,n+1)}^{\sigma}$$
so there is a set of actions that must be identified in order for a cosimplicial structure to exist.  The identification shown here is actually part i) of Theorem \ref{MAINTHM}.  Following an analogous argument it can be seen that parts ii), iii) and iv) are also consequences that come from ensuring $d^jd^i=d^id^{j-1}$ for $i<j.$  With that, part a) of the cosimplicial identities is satisfied as long as these actions are identified since $d_id_j(\sigma)=d_{j-1}d_i(\sigma)$ for all $\sigma \in \X_{n+2}$

Lastly, for part c) we see that 
$$s_n^jd_n^if(1\otimes_k \bigotimes\limits_{\st{\sigma \in \X_{n}}{\sigma \neq \ast}}a_{\sigma})$$
$$= \prod\limits_{\st{\Omega \in \X_{n+1}} {d_i(\sigma)=\ast}} (\Lambda_{(i,n)}^{\Omega}(1\cdot \prod\limits_{\st{\sigma \in \X_n}{s_j(\sigma)=\Omega}}a_{\sigma}))\cdot f(1\otimes_k \bigotimes\limits_{\st{\mu \in \X_n}{\mu\neq\ast}}\prod\limits_{\st{\sigma \in \X_n}{d_is_j(\sigma)=\mu}}a_{\sigma})$$
and:
$$d_{n-1}^is_{n-1}^{j-1}f(1\otimes_k \bigotimes\limits_{\st{\sigma \in \X_n}{\sigma\neq\ast}}a_{\sigma})$$
$$=\prod\limits_{\st{\sigma \in \X_n}{d_i(\sigma)=\ast}}(\Lambda_{(i,n-1)}^{\sigma}(a_{\sigma}))\cdot f(1\otimes_k \bigotimes\limits_{\st{\mu \in \X_n}{\sigma\neq\ast}}1\cdot \prod\limits_{\st{\sigma \in \X_n}{s_{j-1}d_i(\sigma)=\mu}}a_{\sigma})$$
finally
$$d_{n-1}^{i-1}s_{n-1}^{j}f(1\otimes_k \bigotimes\limits_{\st{\sigma \in \X_n}{\sigma\neq\ast}}a_{\sigma})$$
$$=\prod\limits_{\st{\sigma \in \X_n}{d_{i-1}(\sigma)=\ast}}(\Lambda_{(i-1,n-1)}^{\sigma}(a_{\sigma}))\cdot f(1\otimes_k \bigotimes\limits_{\st{\mu \in \X_n}{\mu \neq \ast}}1\cdot \prod\limits_{\st{\sigma \in \X_n}{s_{j}d_{i-1}(\sigma)=\mu}}a_{\sigma})$$
Using the formulas above, we get the following identities on actions:
\begin{itemize}
\item[v)] $\Lambda_{(i,n)}^{\Omega}=\Lambda_{(i,n-1)}^{\sigma}$ for $i,j$, $d_i(\sigma)=\ast$, $s_j(\sigma)=\Omega$, $d_i(\Omega)=\ast$ and the dimension of $\Omega$ is at least 1.
\item[vi)] $\Lambda_{(i,n)}^{\Omega}=\Lambda_{(i-1,n-1)}$ for $i>j+1$, $d_{i-1}(\sigma)=\ast$, $s_j(\sigma)=\Omega$, $d_i(\Omega)=\ast$ and the dimension of $\Omega$ is at least 1.
\end{itemize}
but v) and vi) are consequences of ii) and iv) from the list in Theorem \ref{MAINTHM} since $\Omega$ must be of dimension larger than 1 in order to be a degenerate simplex and have $d_i(\Omega)=\ast$.   
\end{proof}

\section{Visualization of Action Identifications}
\label{VIS}

\begin{remark}
Notice that a $\Lambda_{(i,n-1)}^\sigma$ action exists any time there is an $n$-simplex $\sigma$ with the property that $\sigma \neq \ast$ and $d_i(\sigma)=\ast$ for some $i.$  We will refer to this as the $i^{th}$ action of $\sigma$ and visually we can think of this action as being "pointed" towards the $i^{th}$ face of $\sigma.$
\end{remark}

We now consider ways to visualize the identifications of the actions, presented in  Section \ref{PROOF}.  

\begin{visual}
For iii) (sweep across) if two faces $\mu$ and $\Omega$ of an $n$-simplex $\sigma$ have a common face of $\ast ,$ then the action of $\mu$ which points towards $\ast$ is the same as the action of $\Omega$ which points towards $\ast.$  This is illustrated below with $\sigma$, a 2-simplex, $\mu ,$ the $0^{th}$ face and $\Omega ,$ the $2^{nd}$ face, while the 0-simplex labeled $1$ represents $\ast .$  Notice the $0^{th}$ face of $\Omega$ is $\ast$ and the $1^{st}$ face of $\mu$ is $\ast .$  This gives us that the associated actions, which point towards $\ast$ are identified.  We can see that in this particular instance, the action of $\Omega$  points in the direction of the orientation of $\Omega$ and the action of $\mu$ points against the orientation of $\mu .$  For this reason, when dealing with simplices of dimension $1$ we will refer to the action as either being forward or backward.  In this case the forward action of $\Omega $ is identified with the backwards action of $\mu .$

\begin{equation*}
\begin{tikzpicture}[baseline=(current bounding box.center)]
\matrix (m) 
[matrix of math nodes, row sep=1em, column sep=1em, text height=1.5ex, text depth=0.25ex]
{
0 & &        & & & & & & 1=\ast \\
  & & \sigma & & & & & &  \\
  & &        & & & & & &   \\
  & &        & & & & & &  \\ 
  & &        & & & & & &  \\
2 & &        & & & & & & \\
};
\path[->, thick,]
(m-1-1)
edge node[anchor=south] {$\Omega$}  (m-1-9)
edge node[anchor=east] {$\gamma$} (m-6-1)
(m-1-9)
edge node[anchor=west] {$\mu$} (m-6-1)
; 
\end{tikzpicture}
\end{equation*}
\end{visual}

\begin{visual}
For ii) and iv) (sweep out 1 and sweep out 2) we see that if an $n$-simplex $\sigma$ has an action which points towards a face $\gamma ,$ then any other face of $\sigma$, which is not equal to $\ast$ has an equal action pointing in the corresponding direction, towards some face of $\gamma .$  This is illustrated below with $\sigma ,$ a $2$-simplex whose first face is $\gamma$ which is in fact $\ast.$ We also have that $\mu$ is the $0^{th}$ face of $\sigma$  and $\Omega$ is the $2^{nd}$ face of $\sigma .$  Notice that the action of $\sigma ,$ which points towards $\gamma$ is the same as the backward action of $\Omega$ and the forward action of $\mu .$

\begin{equation*}
\begin{tikzpicture}[baseline=(current bounding box.center)]
\matrix (m) 
[matrix of math nodes, row sep=1em, column sep=1em, text height=1.5ex, text depth=0.25ex]
{
0 & &        & & & & & & 1 \\
  & & \sigma & & & & & &  \\
  & &        & & & & & &   \\
  & &        & & & & & &  \\ 
  & &        & & & & & &  \\
2 & &        & & & & & & \\
};
\path[->, thick,]
(m-1-1)
edge node[anchor=south] {$\Omega$}  (m-1-9)
edge node[anchor=east] {$\ast = \gamma$} (m-6-1)
(m-1-9)
edge node[anchor=west] {$\mu$} (m-6-1)
; 
\end{tikzpicture}
\end{equation*}

\end{visual}

Lastly, for i) (sweep around) a visualization is not completely necessary, as we see that the indication of i) is that if $\sigma$ is a simplex of dimension greater than or equal to 2, then there is at most one action of $\sigma$.  In other words, any two actions of $\sigma$ are equal.

\section{Determining the Coefficients for a Few Spaces}
\label{EXAMPLES}

In this Section, we will use the techniques discussed in Section \ref{VIS} to show how one might determine the possible coefficient modules for a given simplicial set.  Before doing so, we consider the following:

\begin{remark}
When determining the possible coefficient modules, we can simply consider and identify the actions among non-degenerate simplices, since for a degenerate $n+1$-simplex $s_j(\sigma) ,$ if $s_j(\sigma)$ has an action $\Lambda_{(i,n)}^{s_j(\sigma)} ,$ then $d_i(s_j(\sigma))=\ast$ which gives that if $i<j$ we have that $s_{j-1}d_i(\sigma) =\ast$ so $d_i(\sigma)=\ast$ and $\sigma$ has action $\Lambda_{(i,n-1)}^{\sigma}$.  Furthermore, by v) we get $\Lambda_{(i,n)}^{s_j(\sigma)}=\Lambda_{(i,n-1)}^{\sigma} .$  Similarly, if $i\geq j+1$ then $\sigma$ has action $\Lambda_{(i-1,n-1)}^{\sigma}$ which is equal to $\Lambda_{(i,n)}^{s_j(\sigma)} $ which gives us that no additional actions come from degenerate simplices.

To see that new identifications do not take place, we notice if $i<r$ then $d_id_r(s_j(\sigma))=\ast$ and $d_{r-1}d_i(s_j(\sigma))=\ast.$  If $i<j$ this gives that $\Lambda_{(i,n-1)}^{d_rs_j(\sigma)}=\Lambda_{(j-1,n-1)}^{d_is_j(\sigma)} .$  However, any face of the degenerate $n+1$-simplex $s_j(\sigma)$ is degenerate via a face of $\sigma$ and it can be checked that the actions identified above would also be identified with part iii) using the faces of $\sigma.$
\end{remark}

We can now consider a few interesting examples:

\begin{example}
The figure below gives an illustration for the minimal simplicial decomposition of the Torus:

\begin{equation*}
\begin{tikzpicture}[baseline=(current bounding box.center)]
\matrix (m) 
[matrix of math nodes, row sep=1em, column sep=1em, text height=1.5ex, text depth=0.25ex]
{
* & &        & & & & & & * \\
  & & \sigma & & & & & &  \\
  & &        & & & & & &   \\
  & &        & & & & & &  \\ 
  & &        & & & &\tau & &  \\
* & &        & & & & & & *\\
};
\path[->, thick,]
(m-1-1)
edge node[anchor=south] {$a$}  (m-1-9)
edge node[anchor=east] {$b$} (m-6-1)
(m-1-9)
edge node[anchor=west] {$c$} (m-6-1)
(m-1-9)
edge node[anchor=west] {$b$} (m-6-9)
(m-6-1)
edge node[anchor=north]  {$a$} (m-6-9)  
; 
\end{tikzpicture}
\end{equation*}

It is immediately evident from the picture above that there is a forward action of $a$, which must agree with the backward action of $c$, since they are both faces of $\sigma $ which each have a face of $\ast.$  The other implications coming from $\sigma$ are that the forward action of $c$ is equal to the forward action of $b$ and the backward action of $a$ is equal to the backward action of $b.$  Similarly the backward action of $c$ must agree with the forward action of $b$ through $\tau ,$ and we should get that the forward action of $a$ is equal to the forward action of $b$ and the forward action of $c$ is equal to the backward action of $a$.  From the three $1$-simplices $a, b$ and $c$ it can be seen that we would start with 6 actions (one for each direction), but through our identifications, we get that these all must be the same.  Since no other simplices have a face of $\ast$ we get the following Proposition. 

\end{example}

\begin{proposition}
For the minimal simplicial decomposition of the Torus, $HH^{\ast}$ takes only uni-module coefficients.
\end{proposition}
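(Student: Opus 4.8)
The plan is to turn the picture accompanying the statement into a finite list of action-identifications and then check that this list forces every action to coincide.

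First I would pin down the non-degenerate simplices of the minimal torus: one $0$-simplex, which is the basepoint $\ast$; the three $1$-simplices $a,b,c$; the two $2$-simplices $\sigma,\tau$; and nothing non-degenerate in dimension $\geq 3$. By the Remark opening Section \ref{EXAMPLES} it is enough to identify the actions carried by these non-degenerate cells, since degenerate simplices neither create actions nor impose new relations. Since the basepoint of $\X_1$ is the degenerate edge $s_0(\ast)$, which differs from $a$, $b$ and $c$, no face of $\sigma$ or of $\tau$ equals $\ast$; hence neither $2$-simplex carries an action, and axioms i), ii), iv) of Theorem \ref{MAINTHM} are vacuous in this situation. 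On the other hand both endpoints of each of $a,b,c$ are the single vertex $\ast$, so $d_0(a)=d_1(a)=\ast$ (and similarly for $b$ and $c$), and thus each of these edges carries two a priori distinct actions, its forward and its backward action --- six actions in all.

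Next I would feed $\sigma$ and $\tau$, each of dimension $\geq 2$, into axiom iii) (sweep across). For a fixed $2$-simplex and each of the three pairs of face-indices $i<j$, sweep across equates one action of the face $d_j$ with one action of the face $d_i$; running through the three faces of $\sigma$ with the orientations read off the picture produces exactly the three relations recorded in the example preceding the statement (forward-$a$ with backward-$c$, forward-$c$ with forward-$b$, backward-$a$ with backward-$b$), and running through the three faces of $\tau$ produces the other three (backward-$c$ with forward-$b$, forward-$a$ with forward-$b$, forward-$c$ with backward-$a$). Concatenating these six relations into a single chain --- for instance forward-$a$ $=$ backward-$c$ $=$ forward-$b$ $=$ forward-$c$ $=$ backward-$a$ $=$ backward-$b$, the remaining relation serving as a consistency check --- shows that all six actions on $a,b,c$ are one and the same, so $M$ must carry a single $A$-action; equivalently $HH^{\ast}_X(A,M)$ is defined only for $M$ a uni-module, which is the assertion.

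The step I expect to require the most care is the orientation bookkeeping inside axiom iii): translating each pair $(i,j)$ with $i<j$, for $\sigma$ and for $\tau$, into the forward/backward vocabulary of Section \ref{VIS} for the relevant faces, so that the six relations genuinely link up into one class of size six rather than splitting into two classes of three. Everything else is either a finite lookup on the minimal torus or a direct appeal to the Remark of Section \ref{EXAMPLES} and to axiom iii).
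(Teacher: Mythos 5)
Your proposal is correct and follows the same route as the paper: both arguments observe that the two $2$-simplices carry no actions of their own (no face is $\ast$), that each of $a,b,c$ carries a forward and a backward action, and then chain together the six sweep-across identifications coming from $\sigma$ and $\tau$ to collapse all six actions into one. Your write-up is in fact slightly more explicit than the paper's about why axioms i), ii), iv) are vacuous here and about the consistency of the resulting chain.
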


\begin{example}

The figure below is an illustration for the minimal simplicial decomposition of the pinched Torus, where the $1^{st}$ face of $\sigma$ and the $1^{st}$ face of $\tau$ are identified with $\ast$
\begin{equation*}
\begin{tikzpicture}[baseline=(current bounding box.center)]
\matrix (m) 
[matrix of math nodes, row sep=1em, column sep=1em, text height=1.5ex, text depth=0.25ex]
{
 & &        & & & & & &  \\
  & & \sigma & & & & & &  \\
  & &        & & & & & &   \\
  & &        & & & & & &  \\ 
  & &        & & & &\tau & &  \\
 & &        & & & & & & \\
};
\path[->, thick,]
(m-1-1)
edge node[anchor=south] {$a$}  (m-1-9)
edge node[anchor=east] {$\ast$} (m-6-1)
(m-1-9)
edge node[anchor=west] {$c$} (m-6-1)
(m-1-9)
edge node[anchor=west] {$\ast$} (m-6-9)
(m-6-1)
edge node[anchor=north]  {$a$} (m-6-9)  
; 
\end{tikzpicture}
\end{equation*}

Here we see that the $1^{st}$ action of $\sigma$, the backward action of $a$ and the forward action of $c$ are all equal, while the $1^{st}$ action of $\tau$, the forward action of $a$ and the backward action of $c$ are all equal.  No other identifications can be made, so we have the following Proposition.
\end{example}

\begin{proposition}
For the minimal simplicial decomposition of the pinched Torus, $HH^{\ast}$ can take coefficients in any bi-module.
\end{proposition}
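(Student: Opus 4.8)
The plan is to carry out exhaustively, for the simplicial set $\X_\bullet$ of the minimal decomposition of the pinched torus, the bookkeeping of action identifications set up in Section~\ref{VIS}, and to show that after imposing axioms i)--iv) of Theorem~\ref{MAINTHM} exactly two distinct actions survive; a coefficient module is then precisely an $A$-module carrying two (automatically commuting) $A$-actions, i.e.\ an $A$-bimodule, with no symmetry forced on it. First I would record the combinatorics of $\X_\bullet$: starting from the minimal decomposition of the torus --- one non-degenerate $0$-simplex $\ast$, non-degenerate $1$-simplices $a,b,c$, non-degenerate $2$-simplices $\sigma,\tau$ with $d_0\sigma=c$, $d_1\sigma=b$, $d_2\sigma=a$ and $d_0\tau=a$, $d_1\tau=b$, $d_2\tau=c$ --- the pinched torus is obtained by collapsing $b$ onto the basepoint, i.e.\ $d_1\sigma=d_1\tau=\ast$. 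Since collapsing $b$ identifies every vertex with $\ast$, the only non-degenerate $0$-simplex is $\ast$, so both faces of $a$ and both faces of $c$ equal $\ast$; hence $a$ and $c$ each carry, a priori, a forward and a backward action, while $\sigma$ and $\tau$ each carry a single action $\Lambda^{\sigma}_{(1,1)}$, $\Lambda^{\tau}_{(1,1)}$ (single, by i), as neither has a second face equal to $\ast$). There are no non-degenerate simplices of dimension $\geq 3$, and by the Remark opening Section~\ref{EXAMPLES} degenerate simplices produce nothing new, so these six actions are all there are.

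Next I would run the identifications. Applying the sweep-out description (visualizations of ii) and iv)) to $\sigma$, with $d_1\sigma=\ast$, $d_2\sigma=a$, $d_0\sigma=c$, gives $\Lambda^{\sigma}_{(1,1)}=(\text{backward action of }a)=(\text{forward action of }c)$; applying it to $\tau$, with $d_1\tau=\ast$, $d_2\tau=c$, $d_0\tau=a$, gives $\Lambda^{\tau}_{(1,1)}=(\text{backward action of }c)=(\text{forward action of }a)$. I would then check that the remaining axioms add nothing: i) is vacuous beyond what was used; iii) (sweep across), applied to the only two simplices of dimension $\geq 2$, just re-derives for the pair $\{a,c\}$ the identifications already found, since the common face of any two faces of $\sigma$, or of $\tau$, is a $0$-simplex and hence $\ast$; and v), vi) reduce to ii), iv). So the six actions fall into exactly the two classes $L=\{\,\Lambda^{\sigma}_{(1,1)},\ \text{backward }a,\ \text{forward }c\,\}$ and $R=\{\,\Lambda^{\tau}_{(1,1)},\ \text{forward }a,\ \text{backward }c\,\}$, and $L\cap R=\emptyset$, because the forward and backward actions of a given $1$-simplex are never identified in $\X_\bullet$ and no axiom relates $\Lambda^{\sigma}_{(1,1)}$ to $\Lambda^{\tau}_{(1,1)}$.

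Finally, to form the cosimplicial vector space $(M,X)^{\bullet}$ of Theorem~\ref{MAINTHM} one must equip $M$ with one $A$-action indexed by $L$ and one indexed by $R$, and by the commutativity convention of Theorem~\ref{MAINTHM} this is exactly the datum of an $A$-bimodule; conversely any $A$-bimodule serves, assigning its left action to $L$ and its right action to $R$, and since $L$ and $R$ are never forced equal it need not be symmetric --- which is the Proposition. The step that needs real care is the exhaustiveness check: one must use the explicit face maps of $\sigma$ and $\tau$ to see that $a$ (and likewise $c$) occurs in $\sigma$ and in $\tau$ at ``opposite'' face indices, so that sweep-out pins down the \emph{backward} action of $a$ through $\sigma$ but the \emph{forward} action of $a$ through $\tau$ (this is exactly what forces $L\neq R$, in contrast with the torus), and that no instance of iii) secretly links a member of $L$ to a member of $R$.
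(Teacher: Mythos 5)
Your proposal is correct and follows essentially the same route as the paper: you identify the six a priori actions (forward/backward on $a$ and $c$, plus the single action of each of $\sigma$ and $\tau$), apply sweep-out to $\sigma$ and $\tau$ to collapse them into the two classes $\{\Lambda^{\sigma}_{(1,1)},\ \text{backward }a,\ \text{forward }c\}$ and $\{\Lambda^{\tau}_{(1,1)},\ \text{forward }a,\ \text{backward }c\}$, and check that sweep-across yields nothing new --- which is exactly the paper's argument, just with the face-map bookkeeping made explicit rather than read off the picture.
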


\begin{example}
\label{classical}
In the case of $S^n$ (when we consider the minimal simplicial decompositions with one $n$-dimensional non-degenerate simplex) we get the following:

For the classical case when $n=1$ we are allowed bi-module coefficients, since there is a $1$-simplex with both a forward and backward action (see below)

\begin{equation*}
\begin{tikzpicture}[baseline=(current bounding box.center)]
\matrix (m) 
[matrix of math nodes, row sep=1em, column sep=1em, text height=1.5ex, text depth=0.25ex]
{
* & & & & & & & & * \\
};
\path[->, thick,]
(m-1-1)
edge node[anchor=south] {$a$}  (m-1-9)
; 
\end{tikzpicture}
\end{equation*} 

When $n$ is larger than $1$ we see that there is exactly one $n$-simplex, which has $\ast$ as every face, so by i), every arising action is identified, leaving us with the following Proposition.

\end{example}

\begin{proposition}
For the minimal simplicial decomposition of $S^n$ with $n>1$, $HH^{\ast}$ can only take coefficients in uni-modules.
\end{proposition}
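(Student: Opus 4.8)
The plan is to pin down the non-degenerate simplices of the minimal model of $S^n$ and then let part i) of Theorem~\ref{MAINTHM} do the work, using the Remark opening Section~\ref{EXAMPLES} to discard the degenerate simplices. Recall that the minimal simplicial decomposition of $S^n$ (the quotient $\Delta^n/\partial\Delta^n$) has exactly one non-degenerate $0$-simplex, the basepoint $\ast$, and exactly one non-degenerate $n$-simplex $\sigma$; every simplex in dimensions $1,\dots,n-1$ is an iterated degeneracy of $\ast$, and every simplex in dimension $>n$ is degenerate. In particular, for each face index $i$ we have $d_i(\sigma)=\ast$, simply because $d_i(\sigma)$ lives in dimension $n-1\geq 1$, where the (degenerate) basepoint is the only simplex.

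First I would record which actions on $M$ can occur at all. By the Remark opening Section~\ref{VIS}, an action $\Lambda_{(i,n-1)}^{\tau}$ is present only when $\tau\neq\ast$ and $d_i(\tau)=\ast$ for some $i$; and by the Remark opening Section~\ref{EXAMPLES} it suffices to examine the non-degenerate simplices, since degenerate simplices produce neither new actions nor new identifications. Here the only non-degenerate simplices are $\ast$, which contributes nothing, and $\sigma$, which contributes the actions $\Lambda_{(0,n-1)}^{\sigma},\dots,\Lambda_{(n,n-1)}^{\sigma}$. Since $\dim\sigma=n\geq 2$ and $d_i(\sigma)=d_j(\sigma)=\ast$ for all $i<j$, part i) of Theorem~\ref{MAINTHM} (sweep around) gives $\Lambda_{(i,n-1)}^{\sigma}=\Lambda_{(j,n-1)}^{\sigma}$ for every such pair, so all of these coincide. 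Hence $M$ carries at most one action, i.e.\ $M$ is a uni-module, which is the assertion of the Proposition.

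To see that the statement is sharp I would add the easy converse: for any uni-module $M$ the four axioms of Theorem~\ref{MAINTHM} are either vacuous or automatically satisfied (the single action trivially commutes with itself), so a uni-module genuinely does serve as coefficients. I do not expect a serious obstacle here; the only delicate point is the claim that the degenerate simplices in dimensions $1,\dots,n-1$ and $>n$ introduce nothing new, but this is precisely the content of the Remark opening Section~\ref{EXAMPLES}, so it may be cited rather than reproved. A minor bookkeeping caveat is that $\sigma\in\X_n$ contributes actions indexed $\Lambda_{(\,\cdot\,,n-1)}$ rather than $\Lambda_{(\,\cdot\,,n)}$, but part i) applies verbatim once $n\geq 2$.
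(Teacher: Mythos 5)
Your argument is correct and is essentially the paper's own: the only non-degenerate simplex other than $\ast$ is the single $n$-simplex $\sigma$, all of whose faces are $\ast$, so part i) of Theorem \ref{MAINTHM} identifies all of its actions, and the remark at the start of Section \ref{EXAMPLES} disposes of the degenerate simplices. The extra observations (the explicit indexing caveat and the converse that a uni-module does serve as coefficients) are harmless additions beyond what the paper records.
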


\noindent{\bf Acknowledgment. \ }
I would like to thank Andrew Salch for introducing me to Hochschild cohomology and for his many hours of discussing the topics presented in this paper and the referee for his or her suggestions on how to improve the paper.  I would like to thank my wife, Kendall for her continued support.  Lastly, I would like to thank my newborn son, Amos for allowing me to get the mathematics in this paper written down before joining the world.









\end{document}